\definecolor{shadecolor}{gray}{0.875}
\newtheorem{thrm}{Theorem}[section]
\newtheorem{lem}[thrm]{Lemma}
\newtheorem{cor}[thrm]{Corollary}
\newtheorem{prop}[thrm]{Proposition}
\theoremstyle{definition}
\newtheorem{exmple}[thrm]{Example}
\newtheorem{rmk}[thrm]{Remark}
\DeclareMathOperator{\vol}{vol}
\title{B\'{e}zout type inequality in convex geometry}
\author{Jian Xiao}
\date{}
\begin{document}

\begin{abstract}
We give a B{\'e}zout type inequality for mixed volumes, which holds true for any convex bodies. The key ingredient is the reverse Khovanskii-Teissier inequality for convex bodies, which was obtained in our previous work and inspired by its correspondence in complex geometry.
\end{abstract}

\maketitle

\tableofcontents

\section{Introduction}
\subsection{Motivation}
The classical B\'ezout's theorem in algebraic geometry is concerning the number of intersection points of algebraic hypersurfaces, which do not have infinitely many common points. B\'ezout's theorem gives an upper bound for the number of intersection points of such hypersurfaces.
More precisely, let $H_1, ..., H_n \subset \mathbb{C}^n$ be algebraic hypersurfaces of respective degrees $d_1, ..., d_n$, and assume that the hypersurfaces have isolated intersection points, then the intersection number is at most $d_1\cdots d_n$. B\'{e}zout's theorem has various generalizations. For example, the Bernstein-Kushnirenko-Khovanskii theorem (see \cite{bernsteinROOT}, \cite{khovanBKK}, \cite{kusniBKK}) estimates the number of intersection points of $n$ hypersurfaces in $(\mathbb{C}^*)^n$ with fixed Newton polytopes and generic coefficients. Inspired by these two theorems, the following new geometric inequality for convex bodies was obtained in \cite{sz16bezout}:
\begin{equation}\label{eq sz}
  V(K_1,...,K_r, \Delta^{n-r})\vol(\Delta)^{r-1} \leq \prod_{i=1}^r V(K_i, \Delta^{n-1}),
\end{equation}
where $\Delta$ is an $n$-dimensional simplex and $K_1,...,K_r$ are arbitrary convex bodies. Let us present its quite simple and elegant proof. Let $H_1,...,H_r \subset (\mathbb{C}^*)^n$ be generic hypersurfaces with respective Newton polytopes $P_1,...,P_r$. Let $H_{r+1},...,H_{n}$ be given by generic linear forms, thus their Newton polytopes are the standard $n$-simplex $\Delta$. By Bernstein-Kushnirenko-Khovanskii theorem, we have
\begin{equation*}
  \#(H_1 \cap ...\cap H_n)=n!V(P_1,...,P_r, \Delta^{n-r}).
\end{equation*}
On the other hand, $\deg(H_i) = n!V(P_i, \Delta^{n-1})$ for $1\leq i \leq r$, and $\deg(H_i) = 1$ for $i\geq r+1$. Then B\'{e}zout's bound implies
\begin{equation*}
  n!V(P_1,...,P_r, \Delta^{n-r}) =  \#(H_1 \cap ...\cap H_n) \leq \prod_{i=1}^n \deg(H_i)= \prod_{i=1}^r n!V(P_i, \Delta^{n-1}).
\end{equation*}
Since $\vol(\Delta)=1/n!$, B\'{e}zout's theorem implies the above inequality (at least for Newton polytopes).

The authors called the above inequality \textsf{B\'{e}zout inequality for mixed volumes}. In particular, this inequality implies the classical B\'{e}zout's bound for hypersurfaces in $(\mathbb{C}^*)^n$. Moreover, it is conjectured that if the above inequality is true for all convex bodies $K_1,...,K_r$, then $\Delta$ must be an $n$-dimensional simplex. This conjecture was proved in \cite{sszsimplices} if $\Delta$ varies in the class of convex polytopes. Thus the B\'{e}zout inequality characterizes simplices in the class of convex polytopes.
Besides the above inequality, \cite{sz16bezout} also discussed an isomorphic version of the inequality: there is a constant $c_{n, r}$ (depending only on $n$ and $r$) such that
\begin{equation*}
  V(K_1, K_2,...,K_r, D^{n-r})\vol(D)^{r-1} \leq c_{n,r}\prod_{i=1} ^r V(K_i, D^{n-1})
\end{equation*}
is true for all convex bodies $K_1, K_2,...,K_r, D$ in $\mathbb{R}^n$. Furthermore, when $K_1, K_2,...,K_r$ are zonoids, then $c_{n, r}$ can take to be ${r^r}/{r!}$ (see \cite{sz17bezoutcorr}) and this constant is sharp in this case.

\subsection{Main Result}
The aim of this note is to give the following B\'{e}zout type inequality for mixed volumes, which holds true for arbitrary convex bodies.

\begin{thrm}\label{main thrm}
Assume that $a_1,...,a_r \in \mathbb{N}$ and $|a|:=\sum_{i=1} ^r a_i \leq n$, then for any convex bodies $K_1, ..., K_r, D \subset \mathbb{R}^n$ and any integer $k$ satisfying $1\leq k\leq r$, we have
\begin{equation*}
 \left(\begin{array}{c} n\\ a_k \end{array} \right) V(K_1 ^{a_1}, K_2 ^{a_2},...,K_r ^{a_r}, D^{n-|a|})\vol(D)^{r-1} \leq \prod_{i=1} ^r \left(\begin{array}{c} n\\ a_i \end{array} \right) V(K_i ^{a_i}, D^{n-a_i}).
\end{equation*}
\end{thrm}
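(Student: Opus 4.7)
My plan is to prove Theorem~\ref{main thrm} by induction on $r$, using the reverse Khovanskii-Teissier inequality for convex bodies (from the author's previous work) as the key peeling tool. The base case $r = 1$ is a trivial equality. The case $r = 2$, after cancelling $\binom{n}{a_k}$ on both sides, is exactly
\[
\vol(D)\,V(K_1^{a_1}, K_2^{a_2}, D^{n-a_1-a_2}) \leq \binom{n}{a_j}\,V(K_1^{a_1}, D^{n-a_1})\,V(K_2^{a_2}, D^{n-a_2}),
\]
where $j \in \{1,2\}$ is the index distinct from $k$. This is the reverse Khovanskii-Teissier inequality applied with $K_k^{a_k}$ as the ``kept'' factor.

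For the inductive step ($r \geq 3$, fixed $k$), I would iteratively peel off the bodies $K_j$ with $j \neq k$ using a generalized reverse Khovanskii-Teissier estimate of the form
\[
\vol(D)\,V(K_j^{a_j}, \mathcal{R}) \leq \binom{n}{a_j}\,V(K_j^{a_j}, D^{n-a_j})\,V(\mathcal{R}'),
\]
where $V(K_j^{a_j}, \mathcal{R})$ denotes the mixed volume with $K_j^{a_j}$ as one argument and $\mathcal{R}$ encoding the remaining $K_i^{a_i}$ (for $i \neq j$) together with the appropriate powers of $D$, and $V(\mathcal{R}')$ is the analogous mixed volume after removing $K_j^{a_j}$ (with the freed multiplicity filled in by $D$). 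Starting from $\vol(D)^{r-1}\,V(K_1^{a_1}, \ldots, K_r^{a_r}, D^{n-|a|})$ and performing the $r-1$ peelings in succession (one for each $j \neq k$) yields the bound $V(K_k^{a_k}, D^{n-a_k})\,\prod_{j \neq k}\binom{n}{a_j}\,V(K_j^{a_j}, D^{n-a_j})$. Multiplying by $\binom{n}{a_k}$ produces the target right-hand side $\prod_{i=1}^r \binom{n}{a_i}\,V(K_i^{a_i}, D^{n-a_i})$.

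The main obstacle is the generalized reverse Khovanskii-Teissier estimate used at each peel: the two-body version from the author's previous work handles a pair of honest convex bodies, whereas the peeling step needs one of the arguments to be itself an $(r-1)$-fold mixed volume, with the same binomial constant $\binom{n}{a_j}$. I would approach this either by (i) exploiting multilinearity of the mixed volume in each argument---representing the mixed-volume factor as coefficient extraction from a Minkowski combination $\sum_i t_i K_i$ and transferring the two-body inequality through a polynomial-positivity argument---or, more robustly, by (ii) revisiting the author's original proof of reverse Khovanskii-Teissier and verifying that its argument extends to the setting where the ``second body'' is replaced by a mixed-volume expression involving several further convex bodies against the same reference $D$. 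With such a generalized peeling available, the binomial prefactors telescope exactly into the target $\prod_i \binom{n}{a_i}/\binom{n}{a_k}$, closing the induction.
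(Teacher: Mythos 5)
Your overall scheme---peel off the bodies $K_j$, $j\neq k$, one at a time against $D$ using a reverse Khovanskii--Teissier estimate whose binomial constants telescope---is exactly the structure of the paper's proof: the paper reduces Theorem \ref{main thrm} (after relabelling so that $a_k=a_1$) to Lemma \ref{lem morse}, which is precisely your ``generalized peeling'' inequality $\binom{n}{k} V(K^k,L^{n-k})\,V(L^k,M_1,\ldots,M_{n-k}) \geq \vol(L)\,V(K^k,M_1,\ldots,M_{n-k})$, applied with $L=D$, $K=K_j$, $k=a_j$ to replace $K_2^{a_2},K_3^{a_3},\ldots$ successively by $D^{a_2},D^{a_3},\ldots$. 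So the issue is entirely in how you would obtain that lemma, which you leave unproved. Your route (i) does not work as stated: the two-body reverse Khovanskii--Teissier inequality is an inequality, not an identity, and an inequality $P(t)\geq Q(t)$ between polynomials valid for all $t\geq 0$ (obtained by substituting a Minkowski combination $\sum_i t_iM_i$) gives no coefficientwise comparison, so ``coefficient extraction'' has no legitimate implementation. The one honest reduction of this flavour---Minkowski's existence theorem, producing $\widehat M$ with $V(M_1,\ldots,M_{n-1},\cdot)=V(\widehat M^{\,n-1},\cdot)$---only handles the case $k=1$ (a single free slot), as the paper notes; for $k\geq 2$ no such single body exists, so the general-$M_i$ statement cannot be formally deduced from the two-body one.

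Your route (ii) is the correct one and is what the paper actually does: it reruns the mass-transport/real Monge--Amp\`ere reduction of the author's earlier result (\cite[Theorem 5.9]{lx2016correspondences}), observing that after solving the relevant equation for $L$ the convex-body inequality reduces to the pointwise mixed-discriminant inequality $\binom{n}{k} D(A^k,B^{n-k})\,D(B^k,C_1,\ldots,C_{n-k})\geq \det(B)\,D(A^k,C_1,\ldots,C_{n-k})$ with \emph{distinct} matrices $C_1,\ldots,C_{n-k}$, and this is verified directly by simultaneously diagonalizing $A$ against $B$ and writing $\omega_1\wedge\cdots\wedge\omega_{n-k}$ in coordinates, where the claim becomes $\bigl(\sum_J\mu_J\bigr)\bigl(\sum_K\Gamma_{KK}\bigr)\geq \sum_J\mu_J\Gamma_{J^cJ^c}$. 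So the extension you need is true, but it requires redoing that reduction at the level of the proof (or at least of the discriminant inequality), not a formal manipulation of the two-body statement; as written, your proposal has this genuine gap, though its intended endgame coincides with the paper's argument.
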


As a special case, we get the following B\'{e}zout inequality for arbitrary convex bodies, which improves the corresponding estimate in \cite{sz16bezout}, where the authors gave an upper bound: $c_{n,r} \leq n^r r^{r-1}$.

\begin{cor}\label{main cor}
For any convex bodies $K_1, ..., K_r, D \subset \mathbb{R}^n$, we have
\begin{equation*}
   V(K_1, K_2,...,K_r, D^{n-r})\vol(D)^{r-1} \leq n^{r-1}\prod_{i=1} ^r V(K_i, D^{n-1}).
\end{equation*}

\end{cor}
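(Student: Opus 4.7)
The plan is to obtain Corollary \ref{main cor} as an immediate specialization of Theorem \ref{main thrm}: take $a_1 = a_2 = \cdots = a_r = 1$ and any $k \in \{1,\ldots,r\}$. The side condition $|a| \leq n$ then reads $r \leq n$, which is automatic since the mixed volume $V(K_1,\ldots,K_r,D^{n-r})$ is only meaningful for $r \leq n$.

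With this choice, every binomial coefficient appearing in Theorem \ref{main thrm} reduces to $\binom{n}{1} = n$. The theorem therefore collapses to
\[
n \cdot V(K_1,\ldots,K_r,D^{n-r})\,\vol(D)^{r-1} \leq n^{r} \prod_{i=1}^{r} V(K_i, D^{n-1}),
\]
and dividing both sides by $n$ yields the claim with constant $n^{r-1}$.

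There is no genuine obstacle at this step, since all the analytic content is carried out in the proof of Theorem \ref{main thrm} (which, per the abstract, rests on the reverse Khovanskii-Teissier inequality for convex bodies from the author's previous work). The one sanity check worth making is the comparison with the earlier bound $c_{n,r} \leq n^{r} r^{r-1}$ from \cite{sz16bezout}: our constant $n^{r-1}$ is strictly smaller for every $r \geq 1$, confirming the improvement advertised just above the corollary.
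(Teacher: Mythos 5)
Your specialization is correct: setting $a_1=\cdots=a_r=1$ in Theorem \ref{main thrm} turns every binomial coefficient into $\binom{n}{1}=n$, and dividing by the single factor $n$ on the left yields exactly the constant $n^{r-1}$; this is precisely the sense in which the paper introduces the corollary as ``a special case'' of the theorem. However, the proof the paper actually writes out for Corollary \ref{main cor} takes a different, more elementary route: it applies the Diskant inequality to obtain the inradius bound $r(K,L)\geq \vol(K)/\bigl(nV(K^{n-1},L)\bigr)$, hence the inclusion $K_i \subseteq \frac{nV(K_i,D^{n-1})}{\vol(D)}\,D$ up to translation for $i\geq 2$, and then monotonicity of mixed volumes in the slots occupied by $K_2,\ldots,K_r$ gives the inequality directly, with no appeal to the mass-transport and mixed-discriminant machinery behind Lemma \ref{lem morse}. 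Your route buys brevity but imports the full strength of Theorem \ref{main thrm}; the paper's route shows that the corollary only needs the $k=1$ reverse Khovanskii--Teissier inequality, which (as the paper notes) is equivalent to the Diskant-based inradius estimate, so it stays within classical convex geometry. Both are valid, and your argument is non-circular only because the paper proves Theorem \ref{main thrm} independently of the corollary, which it does via Lemma \ref{lem morse}.
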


\begin{rmk}
For convex bodies in $\mathbb{R}^2$, the inequality in Corollary \ref{main cor} was also noticed in \cite[Section 5]{AFO2dim} and \cite[Section 6]{sz16bezout}.
\end{rmk}

\begin{exmple}\label{rmk eq conv}
For convex bodies in $\mathbb{R}^2$, the inequality is sharp: let $K, L$ be line segments and let $D=K+L$, then $V(K, L)\vol(D)=2 V(K, D)V(L, D)$ by the additivity of the mixed volumes.
\end{exmple}

\begin{rmk}
By the Bernstein-Kushnirenko-Khovanskii theorem, Theorem \ref{main thrm} implies an inequality for the number of solutions of generic Laurent polynomials with fixed Newton polytopes. More precisely, let $\{P_{i, j}\}_{j=1} ^{a_i}, \{Q_m\}_{m=1}^{n}$ be generic Laurent polynomials with respective Newton polytopes $K_i$ and $D$. Then
\begin{equation*}
  \left(\begin{array}{c} n\\ a_k \end{array} \right) N(K_1 ^{a_1}, K_2 ^{a_2},...,K_r ^{a_r}, D^{n-|a|})N(D)^{r-1} \leq \prod_{i=1} ^r \left(\begin{array}{c} n\\ a_i \end{array} \right) N(K_i ^{a_i}, D^{n-a_i}),
\end{equation*}
where $N(\cdot)$ denotes the number of solutions of generic Laurent polynomials with corresponding Newton polytopes.
\end{rmk}

\subsubsection{Application}
From the correspondences between complex geometry and convex geometry, we also have the analogy of Theorem \ref{main thrm} on projective varieties.
Then we get a generalization of the classical B\'{e}zout theorem for algebraic hypersurfaces to complete intersection subvarieties.

\begin{thrm}
Let $X$ be an $n$-dimensional complex projective manifold with polarization $H$ and let $A_1,..., A_r$ be nef divisor classes. Assume that $a_1,...,a_r \in \mathbb{N}$ and $|a|:=\sum_{i=1} ^r a_i \leq n$. Let $Y_1,...,Y_r$ be subvarieties of cycle classes $A_1 ^{a_1},...,A_r ^{a_r}$, and assume they have proper intersection, then
\begin{equation*}
  \deg(Y_1 \cap...\cap Y_r) \leq \min_k \left\{\frac{\prod_{i=1} ^r \left(\begin{array}{c} n\\ a_i \end{array} \right)}{\left(\begin{array}{c} n\\ a_k \end{array} \right)(H^n)^{r-1}}\right\} \prod_{i=1}^r \deg(Y_i).
\end{equation*}
\end{thrm}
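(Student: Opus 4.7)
The plan is to reduce the statement to a mixed intersection inequality on $X$ and then transport the proof of Theorem \ref{main thrm} from convex geometry to complex geometry via the standard dictionary between the two settings.

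First, I would rewrite both sides in terms of intersection numbers on $X$. Since $Y_i$ has codimension $a_i$ with cycle class $A_i^{a_i}$ and the $Y_i$'s intersect properly with $|a|\le n$, the cycle $Y_1 \cap \cdots \cap Y_r$ represents $A_1^{a_1}\cdots A_r^{a_r}$, so, with degree taken with respect to $H$,
\[
\deg(Y_1 \cap \cdots \cap Y_r) = A_1^{a_1} \cdots A_r^{a_r} \cdot H^{n-|a|}, \qquad \deg(Y_i) = A_i^{a_i} \cdot H^{n-a_i}.
\]
Dropping the $\min_k$, the theorem reduces to the claim that for each $1\le k\le r$,
\[
\binom{n}{a_k}\bigl(A_1^{a_1}\cdots A_r^{a_r}\cdot H^{n-|a|}\bigr)(H^n)^{r-1} \le \prod_{i=1}^{r}\binom{n}{a_i}\bigl(A_i^{a_i}\cdot H^{n-a_i}\bigr),
\]
so it suffices to prove this for each $k$ and take the minimum afterwards.

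Second, this inequality is the verbatim translation of Theorem \ref{main thrm} under the dictionary $V \leftrightarrow$ intersection number, $\vol(D) \leftrightarrow H^n$, convex body $K_i \leftrightarrow$ nef class $A_i$, reference body $D \leftrightarrow$ polarization $H$. Since $X$ is smooth projective and $H, A_1,\dots,A_r$ are all nef, the mixed intersection numbers enjoy the same positivity and log-concavity (Khovanskii-Teissier type) properties as the mixed volumes, and the binomial coefficients arise in both worlds from the identical expansion of $(A_i + tH)^n$ and $(K_i + tD)^n$. In particular, the reverse Khovanskii-Teissier inequality for convex bodies that drives Theorem \ref{main thrm} has a direct counterpart for nef divisor classes on a projective manifold (indeed this is the complex-geometric inspiration alluded to in the abstract). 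I would then rerun the same iterative argument that proves Theorem \ref{main thrm}, applying the complex-geometric reverse Khovanskii-Teissier inequality step by step to peel off the factors $A_i^{a_i}\cdot H^{n-a_i}$ one at a time from $A_1^{a_1}\cdots A_r^{a_r}\cdot H^{n-|a|}$, each step paying one power of $H^n$ together with the appropriate binomial coefficient.

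The main technical point, and the only non-formal step in the translation, is invoking the reverse Khovanskii-Teissier inequality for nef classes on $X$ in exactly the shape that the combinatorial proof of Theorem \ref{main thrm} requires; once that ingredient is in place the rest is purely formal, and taking the minimum over $k$ produces the stated bound.
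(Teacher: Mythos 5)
Your proposal is correct and follows essentially the same route as the paper: the paper deduces this theorem from its nef-divisor analogue of Theorem \ref{main thrm} (Proposition \ref{thrm bezout divisor}), obtained by running the same iterative peeling argument with the complex-geometric reverse Khovanskii--Teissier inequality from \cite{lx2016correspondences}, and then rewriting $\deg(Y_1\cap\cdots\cap Y_r)$ and $\deg(Y_i)$ as the intersection numbers $A_1^{a_1}\cdots A_r^{a_r}\cdot H^{n-|a|}$ and $A_i^{a_i}\cdot H^{n-a_i}$ exactly as you do.
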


\begin{rmk}
For the constant in the B\'ezout type inequality, as shown by (\ref{eq sz}), the best possible one depends on the conditions that $K_1, ...,K_r, D$ (or $A_1, ...,A_r, H$) satisfy. For instance, if $K_1 = K_2 =...=K_r=K$ and $a_1 =...=a_r =1$, then we need to compare $V(K^{r}, D^{n-r}) \vol(D)^{r-1}$ and $V(K, D^{n-1})^r$. By Alexandrov-Fenchel inequality, we actually have
\begin{equation*}
  V(K^{r}, D^{n-r})\vol(D)^{r-1} \leq V(K,D^{n-1})^r.
\end{equation*}
See Section \ref{sec optimal} for more discussions.
\end{rmk}

To end this introduction, let us mention a few words on the proof of the main result (Theorem \ref{main thrm}). The key ingredient is a geometric inequality that we obtained in our previous work \cite{lx2016correspondences}, which is inspired by its correspondence in complex geometry. And the proof of this key ingredient involves some ideas from mass transport and real Monge-Amp\`{e}re equations.

\subsection{Organization}
In Section \ref{sec prel}, we present some basic definitions and set up some notations. In Section \ref{sec proof}, we discuss the proof of the main result and present some B\'ezout type inequalities in other settings.

\section{Preliminaries}\label{sec prel}

\subsection{Mixed volumes}
We start by giving some basic definitions and some notations. The general reference on convex geometry is Shneider's book \cite{schneiderconvex}.
Let $K_1, ...,K_r \subset \mathbb{R}^n$ be convex bodies and let $\vol(\cdot)$ be the volume function on $\mathbb{R}^n$, then there is a polynomial relation
\begin{equation*}
  \vol(t_1K_1 +...+t_r K_r) = \sum_{i_1 +...+i_r =n} \frac{n!}{i_1 !i_2 !...i_r !} V(K_1 ^{i_1},...,K_r ^{i_r}) t_1 ^{i_1}...t_r ^{i_r},
\end{equation*}
where $t_i \geq 0$. Then the coefficients $V(K_1 ^{i_1},...,K_r ^{i_r})$ define the mixed volumes. In particular, when $K_1 =...=K_r = K$ (up to translations), then the mixed volume is equal to $\vol(K)$.

\begin{lem}
With respect to the Hausdorff metric of compact sets, the mixed volume $V(K_1 ^{i_1},...,K_r ^{i_r})$ is continuous to the variables $K_1,...,K_r$.
\end{lem}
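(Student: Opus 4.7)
My plan is to reduce the continuity of the mixed volume to two well-known facts: the Hausdorff continuity of the ordinary volume functional on convex bodies, and the Hausdorff continuity of Minkowski addition. The defining identity of the mixed volume
\begin{equation*}
  \vol(t_1K_1+\cdots+t_rK_r)=\sum_{i_1+\cdots+i_r=n}\frac{n!}{i_1!\cdots i_r!}V(K_1^{i_1},\ldots,K_r^{i_r})\,t_1^{i_1}\cdots t_r^{i_r}
\end{equation*}
holds as a polynomial identity in the parameters $t_1,\ldots,t_r\ge 0$, and this will allow me to transfer continuity from volumes of Minkowski sums to the individual coefficients.

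First I would verify the following elementary claim: if $K_j^{(m)}\to K_j$ in the Hausdorff metric for each $j=1,\ldots,r$, then for any fixed $t_1,\ldots,t_r\ge 0$ the Minkowski combination $t_1K_1^{(m)}+\cdots+t_rK_r^{(m)}$ converges to $t_1K_1+\cdots+t_rK_r$ in the Hausdorff metric. This follows directly from the estimate $d_H(A+B,A'+B')\le d_H(A,A')+d_H(B,B')$ and the fact that scaling by a fixed nonnegative factor is $t$-Lipschitz. Composing this with the (standard) continuity of $\vol$ on the space of convex bodies endowed with the Hausdorff metric, one obtains continuity of $(K_1,\ldots,K_r)\mapsto \vol(t_1K_1+\cdots+t_rK_r)$ for every fixed tuple $(t_1,\ldots,t_r)$.

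Next I would extract the mixed volumes as continuous functions of the $K_j$'s by inverting the polynomial expansion. Since the right-hand side is a polynomial of total degree $n$ in $(t_1,\ldots,t_r)$, one can choose a finite set $S\subset\mathbb R_{\ge 0}^r$ of parameter values (for instance, a product grid of $n+1$ distinct nonnegative values in each coordinate) for which the associated Vandermonde-type linear system expressing the coefficients in terms of the values $\{\vol(t_1K_1+\cdots+t_rK_r)\}_{(t_1,\ldots,t_r)\in S}$ is invertible with a fixed invertible matrix. Each coefficient $V(K_1^{i_1},\ldots,K_r^{i_r})$ is thereby expressed as a fixed linear combination of continuous functions of $(K_1,\ldots,K_r)$, so it is itself continuous. (Equivalently, one may appeal to the standard inclusion-exclusion formula writing $n!\,V(K_1,\ldots,K_n)$ as an alternating sum of volumes of Minkowski sums of subcollections, which makes the reduction manifest.)

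The only genuinely nontrivial input in this plan is the Hausdorff continuity of the volume functional itself, and this is where I would expect any subtlety to appear; however, this is completely classical and can be established either via the squeeze $(1-\varepsilon)K\subset K'\subset(1+\varepsilon)K$ that a small Hausdorff perturbation provides once $K$ is normalized to contain a ball, or by referring to Schneider's monograph cited at the start of the preliminaries. Everything else is formal bookkeeping with the polynomial identity.
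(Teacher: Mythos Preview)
Your argument is correct and is essentially the standard route to this fact: reduce to Hausdorff continuity of the volume functional and of Minkowski addition, then recover each mixed volume as a fixed linear combination of values $\vol(t_1K_1+\cdots+t_rK_r)$ via polynomial interpolation (or, equivalently, the polarization/inclusion--exclusion formula). The only caveat worth flagging is the one you already noted: the squeeze $(1-\varepsilon)K\subset K'\subset(1+\varepsilon)K$ presupposes that $K$ has nonempty interior, so for degenerate bodies one either appeals directly to the reference or argues via outer parallel bodies.

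As for comparison with the paper: there is nothing to compare. The paper states this lemma as a background preliminary with no proof, deferring entirely to Schneider's monograph \cite{schneiderconvex}. Your write-up supplies exactly the kind of self-contained justification the paper omits.
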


\begin{lem}
The mixed volume $V(K_1 ^{i_1},...,K_r ^{i_r})$ is symmetric to the variables, and is monotone with respect to the inclusion relation.
\end{lem}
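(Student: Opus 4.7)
The plan is to derive symmetry directly from the defining polynomial identity and to obtain monotonicity via an integral representation of the mixed volume in terms of support functions.

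For symmetry, observe that the polynomial $\vol(t_1K_1+\cdots+t_rK_r)$ appearing in the defining relation is unchanged when the pairs $(t_i, K_i)$ are simultaneously permuted by any $\sigma\in S_r$. Identifying the coefficient of a fixed monomial $t_1^{i_1}\cdots t_r^{i_r}$ on both sides then shows that $V(K_1^{i_1},\ldots,K_r^{i_r})$ depends only on the unordered multiset of convex bodies counted with their multiplicities $i_j$, which is the claimed symmetry. Equivalently, after fully polarizing to an $n$-argument functional $V(L_1,\ldots,L_n)$ (where $K_j$ is repeated $i_j$ times), this functional is invariant under every permutation of its $n$ slots.

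For monotonicity, by symmetry it suffices to vary a single argument. The key tool is the integral representation
\begin{equation*}
V(K_1, K_2, \ldots, K_n) = \frac{1}{n}\int_{S^{n-1}} h_{K_1}(u)\, dS(K_2,\ldots,K_n; u),
\end{equation*}
where $h_{K_1}$ is the support function of $K_1$ and $S(K_2,\ldots,K_n;\cdot)$ is the (nonnegative) mixed surface area measure, depending only on the remaining arguments. Since $K_1 \subseteq K_1'$ is equivalent to $h_{K_1}\le h_{K_1'}$ pointwise on $S^{n-1}$ by definition of the support function, integrating against a nonnegative measure immediately yields $V(K_1,K_2,\ldots,K_n)\le V(K_1',K_2,\ldots,K_n)$, and iterating over the remaining slots (via symmetry) handles monotonicity in any argument.

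The main obstacle is constructing the mixed surface area measure and verifying its nonnegativity, which is not entirely formal because one does not in general have $K_1' = K_1 + M$ for a convex body $M$ when $K_1 \subseteq K_1'$. I would follow the standard route: first treat polytopes, where the Minkowski--Steiner expansion $\vol(P+tL)=\vol(P)+t\sum_j h_L(u_j)\vol_{n-1}(F_j)+O(t^2)$ exhibits the surface area measure of $P$ as a discrete nonnegative sum of Dirac masses at the outer facet normals weighted by facet areas; then polarize in the remaining $n-1$ arguments using Minkowski additivity to obtain the mixed case for polytopes; and finally pass to general convex bodies using the continuity lemma stated just above together with the density of polytopes in the space of convex bodies under the Hausdorff metric. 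Complete details are found in Schneider's book cited in the preliminaries.
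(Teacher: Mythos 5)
Your proof is correct, and it is the standard argument: the paper itself states this lemma as background without proof, deferring to Schneider's book, and what you give is essentially the textbook proof found there. Symmetry indeed drops out of the defining polynomial expansion by permuting the pairs $(t_i,K_i)$ and comparing coefficients, and monotonicity follows from the representation $V(K_1,\ldots,K_n)=\frac{1}{n}\int_{S^{n-1}}h_{K_1}\,dS(K_2,\ldots,K_n;\cdot)$ once the mixed area measure is known to be nonnegative, which your polytope-then-limit outline establishes in the usual way. One small streamlining for the final step: you do not actually need weak convergence of the mixed area measures to pass from polytopes to general bodies; since the end goal is only the inequality $V(K_1,K_2,\ldots,K_n)\leq V(K_1',K_2,\ldots,K_n)$, it suffices to approximate by polytopes $P\subseteq P'$ (e.g.\ inner approximation of $K_1$, outer approximation of $K_1'$) and invoke the continuity of mixed volumes stated in the preceding lemma, so long as the approximating sequences preserve the inclusion.
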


\subsection{Mixed discriminants}\label{sec discr}
Similar to mixed volumes, the mixed discriminants can be defined as following. Let $M_1, ..., M_r$ be positive definite hermitian matrices, then for $t_i \geq 0$ we have
\begin{equation*}
  \det(t_1 M_1 +...+t_r M_r) = \sum_{i_1 +...+i_r =n} \frac{n!}{i_1 !i_2 !...i_r !} D(M_1 ^{i_1},...,M_r ^{i_r}) t_1 ^{i_1}...t_r ^{i_r}.
\end{equation*}
The coefficients $D(M_1 ^{i_1},...,M_r ^{i_r})$ define the mixed discriminants. 

In our setting, from the viewpoints of complex geometry, it is useful to interpret the mixed discriminants as the wedge products of positive $(1,1)$ forms. Assume that $M=[a_{i\bar j}]$ be a positive definite hermitian matrix, then it determines a positive $(1,1)$ form on $\mathbb{C}^n$:
\begin{equation*}
  M\mapsto  \omega_M :=\sqrt{-1}\sum_{i, j} a_{i\bar j} dz^i \wedge d{\bar z} ^j.
\end{equation*}
Note that $\omega_M ^n = \det(M) \Phi $, where $\Phi$ is a volume form of $\mathbb{C}^n$. Denote the positive $(1,1)$ form associated to $M_i$ by $\omega_i$. Since
\begin{equation*}
  (t_1 \omega_1 +...+t_r \omega_r)^n = \sum_{i_1 +...+i_r =n}\frac{n!}{i_1 !i_2 !...i_r !} (\omega_1 ^{i_1}\wedge...\wedge \omega_r ^{i_r}) t_1 ^{i_1}...t_r ^{i_r},
\end{equation*}
we get $D(M_1 ^{i_1},...,M_r ^{i_r})=\omega_1 ^{i_1}\wedge...\wedge \omega_r ^{i_r} /\Phi$.

\section{B\'{e}zout type inequality}\label{sec proof}

In this section, we discuss the proof around Theorem \ref{main thrm}. First, we give an alternative method to prove Corollary \ref{main cor}. It actually follows from Diskant inequality.

\subsection{Proof of Corollary \ref{main cor}: using Diskant inequality}
Let $K, L$ be convex bodies in $\mathbb{R}^n$ with non-empty interior. Recall that the \emph{inradius} of $K$ relative to $L$ is defined by
\begin{equation*}
  r(K, L):=\max\{\lambda>0| \lambda L + t \subset K\ \ \textrm{for some}\ t\in \mathbb{R}^n\}.
\end{equation*}
Applying the Diskant inequality (see \cite[Section 7.2]{schneiderconvex}) to $K, L$ implies
\begin{align*}
  r(K, L)\geq \frac{V(K^{n-1}, L)^{1/n-1}-\left(V(K^{n-1}, L)^{n/n-1}-\vol(K)\vol(L)^{1/n-1}\right)^{1/n}}{\vol(L)^{1/n-1}}.
\end{align*}
If $K$ is not a scaling of $L$ for any translation, as in \cite[Section 5]{lx2016correspondences}, applying the generalized binomial formula to the second bracket in numerator yields
\begin{align*}
  &V(K^{n-1}, L)^{1/n-1}-\left(V(K^{n-1}, L)^{n/n-1}-\vol(K)\vol(L)^{1/n-1}\right)^{1/n}\\
  &= \sum_{k=1}^{\infty} \left( \begin{array}{c} 1/n \\ k \end{array}\right) (-1)^{k+1} \vol(K)^k\vol(L)^{k/n-1} V(K^{n-1}, L)^{\frac{1-kn}{n-1}}\\
  &= \frac{1}{n}\vol(K)\vol(L)^{1/n-1} V(K^{n-1}, L)^{-1} + \sum_{k=2}^{\infty}...\\
  & \geq \frac{1}{n}\vol(K)\vol(L)^{1/n-1} V(K^{n-1}, L)^{-1},
\end{align*}
because every term in the sum $\sum_{k=2}^{\infty} ...$ is non negative. Substituting, we obtain
\begin{equation}\label{ineq inradius}
  r(K, L)\geq \frac{\vol(K)}{nV(K^{n-1}, L)}.
\end{equation}
Note that the above inequality clearly holds when $K =cL$ (up to some translation). Thus in any case, we have $L \subseteq \frac{nV(L, K^{n-1})}{\vol(K)} K$ (up to some translation).

Now we are able to give the first proof of Corollary \ref{main cor}.

\begin{proof}
Using the continuity of mixed volumes, by taking limits we can assume all the convex bodies $K_1, ...,K_r, D$ have non-empty interior. Applying the above estimate of inradius to the pairs $(K_2, D),...,(K_r, D)$ yields, up to some translations,
\begin{equation*}
  K_i \subseteq \frac{nV(K_i, D^{n-1})}{\vol(D)} D \ \ \textrm{for every}\ \ 2\leq i \leq r.
\end{equation*}
By the monotone property of mixed volumes, the above inclusions immediately imply Corollary \ref{main cor}:
\begin{equation*}
   V(K_1, K_2,...,K_r, D^{n-r})\vol(D)^{r-1} \leq n^{r-1}\prod_{i=1} ^r V(K_i, D^{n-1}).
\end{equation*}
\end{proof}

\begin{rmk}
The inclusion $L \subseteq \frac{nV(L, K^{n-1})}{\vol(K)} K$ (up to some translation) corresponds exactly to the Morse type bigness criterion in algebraic geometry (see e.g. \cite[Chapter 8]{demaillyAGbook}). Actually, using Alexandrov body construction we have
$\vol(\{h_K -h_L\})\geq \vol(K) - nV(K^{n-1}, L)$, which can be considered as the ``algebraic Morse inequality'' in convex geometry (see \cite[Section 5.1]{lx2016correspondences} for more details).
\end{rmk}

\subsection{Proof of Theorem \ref{main thrm}: using reverse Khovanskii-Teissier inequality}
As a consequence of the above inradius estimate (\ref{ineq inradius}), it is easy to see that, for any convex bodies $K, L, M$ in $\mathbb{R}^n$ we have
\begin{equation}\label{ineq morse1}
  nV(K, L^{n-1})V(L, M^{n-1})\geq \vol(L)V(K, M^{n-1}).
\end{equation}
For instance, one can see \cite[Section 5]{lx2016correspondences}. Actually, as a consequence of \cite{weilinclusion} (see also \cite{lutwakinclusion}), the above inequality (\ref{ineq morse1}) implies (\ref{ineq inradius}). Thus they are equivalent.

\begin{rmk}
We call the above kind of inequality as ``reverse Khovanskii-Teissier inequality'' in \cite{lx2016correspondences}, because it gives us an upper bound of the mixed volume $V(K, M^{n-1})$. See also \cite{lehmxiao16convexity} for the discussion in an abstract setting from the viewpoints of convex analysis. Its K\"ahler geometry version first appeared in \cite{popovici2016sufficientbig}, which was proved using complex Monge-Amp\`{e}re equations.
\end{rmk}

Inspired by \cite{milman99masstransport} and the corresponding results in complex geometry, using methods from mass transport (see \cite[Theorem 5.9]{lx2016correspondences}), the inequality (\ref{ineq morse1}) can be generalized: for any convex bodies $K, L, M$ in $\mathbb{R}^n$ we have
\begin{equation*}
 \left(\begin{array}{c} n\\ k \end{array} \right) V(K^k, L^{n-k})V(L^k, M^{n-k})\geq \vol(L)V(K^k, M^{n-k}),
\end{equation*}
where $\left(\begin{array}{c} n\\ k \end{array} \right) = \frac{n!}{k!(n-k)!}$.

The above inequality can be generalized as following, which is enough to conclude Theorem \ref{main thrm}.

\begin{lem}\label{lem morse}
For any convex bodies $K, L, M_1, ..., M_{n-k}$ in $\mathbb{R}^n$, we have
\begin{equation}\label{ineq morsek}
 \left(\begin{array}{c} n\\ k \end{array} \right) V(K^k, L^{n-k})V(L^k, M_1, ..., M_{n-k})\geq \vol(L)V(K^k, M_1, ..., M_{n-k}).
\end{equation}
\end{lem}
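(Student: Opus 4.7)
The naive attempt---substituting $M=\sum_{j=1}^{n-k} s_j M_j$ into the single-body $\binom{n}{k}$-inequality already stated and reading off the coefficient of $s_1\cdots s_{n-k}$---does not suffice. Already for $n-k=2$, non-negativity of a polynomial $As_1^2+2Bs_1s_2+Cs_2^2$ on the positive orthant forces only $A,C\ge 0$ and (when $B<0$) $B^2\le AC$, while what we need is $B\ge 0$, which is not a formal consequence. The plan is therefore to revisit the mass-transport proof of the single-body case from \cite[Theorem 5.9]{lx2016correspondences} and upgrade it directly to the multi-body setting, rather than to polarize it externally.

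First, by Hausdorff continuity of mixed volumes, I would reduce to smooth, strictly convex bodies of positive volume. Introduce the Brenier potential $\psi:L\to\mathbb{R}$, so that $\nabla\psi$ pushes the normalized Lebesgue measure on $L$ onto that on $K$ and $\det D^2\psi\equiv\vol(K)/\vol(L)$ on $L$. Using the standard Alexandrov-type representation of mixed volumes as integrals of mixed discriminants of Hessians of support functions, I would express both $V(L^k, M_1,\dots,M_{n-k})$ and $V(K^k, M_1,\dots,M_{n-k})$ as integrals over $L$ whose integrands are mixed discriminants involving the Hessians $D^2 h_{M_j}$, the identity matrix (for the $L^k$-slot), and $D^2\psi$ (for the $K^k$-slot, via pullback along $\nabla\psi$). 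Crucially, these integrands are symmetric-multilinear in $M_1,\dots,M_{n-k}$, so the rewriting from $M^{n-k}$ to $(M_1,\dots,M_{n-k})$ is purely formal and introduces no new analytic difficulty.

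What remains is the pointwise matrix inequality
\[
\binom{n}{k}\,D(A^k,B^{n-k})\,D(B^k,C_1,\dots,C_{n-k}) \;\ge\; \det(B)\,D(A^k,C_1,\dots,C_{n-k})
\]
for positive definite $A,B,C_1,\dots,C_{n-k}$, whose diagonal case $C_1=\dots=C_{n-k}=C$ is the mixed-discriminant inequality underpinning the single-body proof. To prove the polarized form, I would simultaneously diagonalize $A$ and $B$ by congruence, so that $D(A^k,B^{n-k})$ and $\det B$ become elementary symmetric polynomials in the common eigenvalues, then expand the remaining mixed discriminants multilinearly in the $C_j$ and compare term by term via AM-GM on the eigenvalue coordinates. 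Integrating this pointwise inequality over $L$ with $A=D^2\psi$ (after pullback), $B=I$, and $C_j=D^2 h_{M_j}$, and using $\det D^2\psi=\vol(K)/\vol(L)$ in the change-of-variables for the $K^k$-factor, would conclude Lemma \ref{lem morse}.

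The main obstacle is the polarized pointwise matrix inequality. Polarization of a diagonal matrix inequality typically fails, but the simultaneous diagonalizability of the two ``weighted'' matrices $A$ and $B$ by congruence reduces the statement to a symmetric-function inequality in the eigenvalues of $A,B$ and the matrix entries of the $C_j$, which should be tractable by AM-GM once the combinatorics of the mixed-discriminant expansion are unraveled.
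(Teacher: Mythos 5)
Your proposal follows essentially the same route as the paper: a mass-transport (Brenier/Monge--Amp\`ere) reduction of the mixed-volume inequality to exactly the pointwise mixed-discriminant inequality $\binom{n}{k}D(A^k,B^{n-k})D(B^k,C_1,\dots,C_{n-k})\ge \det(B)\,D(A^k,C_1,\dots,C_{n-k})$, proved after simultaneously diagonalizing $A$ and $B$. The only cosmetic difference is that the paper phrases the pointwise step via positive $(1,1)$-forms and needs no AM--GM: once $B=I$ and $A=\mathrm{diag}(\mu_1,\dots,\mu_n)$, every term $\mu_J\Gamma_{J^cJ^c}$ on the right already occurs among the nonnegative terms of the expanded product $(\sum_J\mu_J)(\sum_K\Gamma_{KK})$ on the left.
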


\begin{proof}
By continuity of mixed volumes, we can assume all convex bodies have non-empty interior.

Before starting the proof, we point out that, for $k=1$ the inequality follows from (\ref{ineq morse1}) (thus is a consequence of Diskant inequality). This is a consequence of Minkowski's existence theorem (see e.g. \cite[Chapter 8]{schneiderconvex}): up to some translations, there exists a unique convex body $\widehat{M}$ such that $V(M_1, ..., M_{n-1}, \cdot)=V(\widehat{M}^{n-1},\cdot)$.

For general $k$, its proof is almost the same as 
\cite[Theorem 5.9]{lx2016correspondences}. So we just sketch the ingredients and the ideas. Inspired by \cite{milman99masstransport} where the authors reproved some of the Alexandrov-Fenchel inequalities by using mass transport, we apply a result of \cite{gromov1990convex}
and results from mass transport (see \cite{brenier1991polar, mccann95existence}). Then after solving a real Monge-Amp\`{e}re equation related to $L$, the desired geometric inequality of convex bodies can be reduced to an inequality for mixed discriminants -- more precisely, the mixed discriminants given by the Hessian of those convex functions defining the convex bodies. We omit the details for this reduction.

In our setting, the inequality needed for mixed discriminants can be stated as following: for any positive definite symmetric matrices $A, B, C_1, ..., C_{n-k}$ we have
\begin{equation*}\label{ineq disc}
  \left(\begin{array}{c} n\\ k \end{array} \right) D(A^k, B^{n-k})D(B^k, C_1, ..., C_{n-k})\geq \det(B)D(A^k, C_1, ..., C_{n-k}).
\end{equation*}
By the discussions in Section \ref{sec discr}, denote the associated positive $(1,1)$ forms to $A, B, C_1, ..., C_{n-k}$ by $\omega_A, \omega_B, \omega_1,...,\omega_{n-k}$, then the above inequality is equivalent to
\begin{equation*}
  \left(\begin{array}{c} n\\ k \end{array} \right) (\omega_A ^k \wedge \omega_B ^{n-k})(\omega_B^k \wedge \omega_1\wedge...\wedge\omega_{n-k})\geq \omega_B ^n (\omega_A^k \wedge \omega_1\wedge...\wedge\omega_{n-k}).
\end{equation*}
Now the proof is straightforward (see e.g. \cite[inequality (6)]{lx2016correspondences}). By changing the coordinates, we can assume that
\begin{equation*}
  \omega_B = \sqrt{-1}\sum_{j=1}^n dz^j \wedge d\bar{z}^j, \ \ \omega_A =\sqrt{-1}\sum_{j=1}^n \mu_j dz^j \wedge d\bar{z}^j,
\end{equation*}
and
\begin{equation*}
  \omega_1\wedge...\wedge\omega_{n-k} = (\sqrt{-1})^{n-k} \sum_{|I|=|J|=n-k} \Gamma_{IJ} dz^I \wedge d\bar{z}^J.
\end{equation*}
Denote by $\mu_J$ the product $\mu_{j_1}...\mu_{j_k}$ with index $J=(j_1<...<j_k)$ and denote by $J^c$ the complement index of $J$. Then it is easy to see
\begin{equation*}
  \left(\begin{array}{c} n\\ k \end{array} \right)\frac{\omega_A ^k \wedge \omega_B ^{n-k}}{\omega_B ^n} \cdot \frac{\omega_B^k \wedge \omega_1\wedge...\wedge\omega_{n-k}}{\omega_A^k \wedge \omega_1\wedge...\wedge\omega_{n-k}}= \frac{(\sum_{J}\mu_J)(\sum_K \Gamma_{KK})}{\sum_{J}\mu_J \Gamma_{J^c J^c}}\geq 1.
\end{equation*}

Thus we get the desired inequality for mixed discriminants, and this finishes the proof of the mixed volume inequality for convex bodies.

\end{proof}

\begin{rmk}
We remark that the above result (and its proof) is inspired by its correspondence in complex geometry. And the above inequality for positive $(1,1)$ forms is just a bit generalization of a pointwise estimate in \cite{popovici2016sufficientbig}, where the author used the inequality for $k=1$ and complex Monge-Amp\`{e}re equations to study Morse type inequality in the K\"ahler setting.
\end{rmk}

Now we can prove Theorem \ref{main thrm}.

\begin{proof}
Without loss of generalities, we assume all the convex bodies $K_1,...,K_r, D$ have non-empty interior. It is enough to show the case when $a_k$ is taken to be $a_1$. Applying Lemma \ref{lem morse} to $L=D, K= K_2$ implies
\begin{equation*}
  V(K_1 ^{a_1}, K_2 ^{a_2},...,K_r ^{a_r}, D^{n-|a|})\vol(D) \leq \left(\begin{array}{c} n\\ a_2 \end{array} \right)V(K_1 ^{a_1},D ^{a_2},...,K_r ^{a_r}, D^{n-|a|}) V(K_2 ^{a_2}, D^{n-a_2}).
\end{equation*}
Similarly, for the factor on the right hand side, we have
\begin{equation*}
  V(K_1 ^{a_1},D ^{a_2},...,K_r ^{a_r}, D^{n-|a|})\vol(D) \leq \left(\begin{array}{c} n\\ a_3 \end{array} \right)V(K_1 ^{a_1}, D ^{a_2}, D^{a_3},...,K_r ^{a_r}, D^{n-|a|}) V(K_3 ^{a_3}, D^{n-a_3}).
\end{equation*}
Repeating the procedure for $r-1$ times and multiplying these inequalities yield the desired B\'ezout type inequality:
\begin{equation*}
 \left(\begin{array}{c} n\\ a_1 \end{array} \right) V(K_1 ^{a_1}, K_2 ^{a_2},...,K_r ^{a_r}, D^{n-|a|})\vol(D)^{r-1} \leq \prod_{i=1} ^r \left(\begin{array}{c} n\\ a_i \end{array} \right) V(K_i ^{a_i}, D^{n-a_i}).
\end{equation*}
\end{proof}

\subsection{B\'{e}zout type inequality in other settings}

By the proof of the main result, it is clear that we have a similar inequality for mixed discriminants (or wedge products of positive forms).
\begin{prop}
Assume that $a_1,...,a_r \in \mathbb{N}$ and $|a|:=\sum_{i=1} ^r a_i \leq n$, then for any (semi-)positive definite symmetric matrices $M_1, ..., M_r, N$ and any integer $k$ satisfying $1\leq k\leq r$, we have
\begin{equation*}
 \left(\begin{array}{c} n\\ a_k \end{array} \right) D(M_1 ^{a_1}, M_2 ^{a_2},..., M_r ^{a_r}, N^{n-|a|})\det(N)^{r-1} \leq \prod_{i=1} ^r \left(\begin{array}{c} n\\ a_i \end{array} \right) D(M_i ^{a_i}, N^{n-a_i}).
\end{equation*}
\end{prop}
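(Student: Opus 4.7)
The plan is to run exactly the same telescoping argument used for Theorem~\ref{main thrm}, but at the level of mixed discriminants, where the required ``reverse Khovanskii--Teissier'' input has in fact already been established inside the proof of Lemma~\ref{lem morse}. Recall that in that proof, after diagonalizing $\omega_B$ and $\omega_A$ simultaneously and writing $\omega_1\wedge\cdots\wedge\omega_{n-k}=(\sqrt{-1})^{n-k}\sum_{|I|=|J|=n-k}\Gamma_{IJ}\,dz^I\wedge d\bar z^J$, the author verifies
\[
  \binom{n}{k}\,D(A^k,B^{n-k})\,D(B^k,C_1,\ldots,C_{n-k})\ \geq\ \det(B)\,D(A^k,C_1,\ldots,C_{n-k})
\]
for any positive definite symmetric matrices $A,B,C_1,\ldots,C_{n-k}$. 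This is the matrix analogue of inequality~(\ref{ineq morsek}) and will serve as the sole engine of the proof.

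First I would reduce to the positive definite case by a continuity/perturbation argument (replace each semidefinite $M_i$ and $N$ by $M_i+\varepsilon I$, $N+\varepsilon I$ and let $\varepsilon\to 0$; mixed discriminants are polynomial, hence continuous, in their matrix arguments). By the symmetry of mixed discriminants we may assume $k=1$, so that the distinguished binomial coefficient on the left is $\binom{n}{a_1}$. Then I would apply the boxed matrix inequality iteratively, precisely as in the proof of Theorem~\ref{main thrm}. At the first step, take $(A,B)=(M_2,N)$ with exponent $a_2$, and let the ``tail'' $C_1,\ldots,C_{n-a_2}$ be the collection $M_1^{a_1},M_3^{a_3},\ldots,M_r^{a_r},N^{n-|a|}$; this gives
\[
  D(M_1^{a_1},\ldots,M_r^{a_r},N^{n-|a|})\,\det(N)\ \leq\ \binom{n}{a_2}\,D(M_1^{a_1},N^{a_2},M_3^{a_3},\ldots,M_r^{a_r},N^{n-|a|})\,D(M_2^{a_2},N^{n-a_2}).
\]
Next, apply the same inequality to the first factor on the right with $(A,B)=(M_3,N)$ and exponent $a_3$, and so on for $i=4,\ldots,r$. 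After $r-1$ rounds, every $M_i^{a_i}$ with $i\geq 2$ has been traded for $N^{a_i}$, which leaves $D(M_1^{a_1},N^{n-a_1})$ on the right, and multiplying the $r-1$ inequalities yields
\[
  D(M_1^{a_1},\ldots,M_r^{a_r},N^{n-|a|})\,\det(N)^{r-1}\ \leq\ \Bigl(\prod_{i=2}^{r}\binom{n}{a_i}\Bigr)\,D(M_1^{a_1},N^{n-a_1})\,\prod_{i=2}^{r} D(M_i^{a_i},N^{n-a_i}).
\]
Multiplying through by $\binom{n}{a_1}$ gives the claimed inequality.

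There is essentially no new obstacle here: the only non-trivial content is the pointwise mixed discriminant inequality, and that is the very computation carried out at the end of the proof of Lemma~\ref{lem morse}. The remaining work is the bookkeeping of the telescoping product, which is mechanical once one checks that at each step the number of ``tail'' entries is correct (equal to $n-a_i$) and that the binomial coefficient $\binom{n}{a_1}$ is the one singled out by not performing a substitution on $M_1^{a_1}$; by symmetry of mixed discriminants, choosing to spare index $k$ instead of $1$ produces the stated family of bounds parametrized by $k$.
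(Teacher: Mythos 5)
Your proposal is correct and follows exactly the route the paper intends: the pointwise mixed discriminant inequality established inside the proof of Lemma \ref{lem morse} (via positive $(1,1)$ forms) plus the same telescoping substitution of $M_i^{a_i}$ by $N^{a_i}$ used in the proof of Theorem \ref{main thrm}, with the semidefinite case handled by continuity. The bookkeeping (tail lengths $n-a_i$, sparing the index $k$, multiplying the $r-1$ inequalities) is carried out correctly, so nothing is missing.
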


\begin{rmk}
It is clear that the B\'{e}zout type inequality for mixed discriminants is a ``determinant-trace'' type inequality. In the case when $M_1, ..., M_r$ are diagonal matrices, it is easy to see that
\begin{equation*}
   D(M_1 ^{a_1}, M_2 ^{a_2},..., M_r ^{a_r}, N^{n-|a|})\det(N)^{r-1} \leq \frac{(n!)^{r-1}(n-|a|)!}{\prod_{i=1} ^r (n-a_i)!}\prod_{i=1} ^r D(M_i ^{a_i}, N^{n-a_i}).
\end{equation*}
We are not sure if it also holds if $M_1, ..., M_r$ are not diagonal. For positive matrices $M_1, ..., M_r, N$ satisfying some other special conditions (e.g. ``doubly stochastic''), the constant can be estimated more effectively, see e.g. \cite{discrbound}, \cite{gurvitsDiscr}, \cite{gurvitsDisc1}.
\end{rmk}

From the correspondences between complex geometry and convex geometry (see \cite{lx2016correspondences}), we also have the following analogy on projective varieties.

\begin{prop}\label{thrm bezout divisor}
Let $X$ be a smooth projective variety of dimension $n$, defined over $\mathbb{C}$. Assume that $a_1,...,a_r \in \mathbb{N}$ and $|a|:=\sum_{i=1} ^r a_i \leq n$, then for any nef divisors $A_1, ..., A_r, D$ and any integer $k$ satisfying $1\leq k\leq r$, we have
\begin{equation*}
 \left(\begin{array}{c} n\\ a_k \end{array} \right) (A_1 ^{a_1}\cdot A_2 ^{a_2}\cdot...\cdot A_r ^{a_r}\cdot D^{n-|a|})(D^n)^{r-1} \leq \prod_{i=1} ^r \left(\begin{array}{c} n\\ a_i \end{array} \right) (A_i ^{a_i}\cdot D^{n-a_i}).
\end{equation*}
\end{prop}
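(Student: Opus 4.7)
The plan is to mirror the convex-geometric argument used in the proof of Theorem \ref{main thrm}, substituting mixed volumes by intersection numbers and mass transport / real Monge--Amp\`ere by complex Monge--Amp\`ere. The combinatorial structure of the proof is identical, so the whole argument reduces to establishing the K\"ahler/algebraic analog of Lemma \ref{lem morse}, namely the reverse Khovanskii--Teissier inequality for nef classes:
\begin{equation*}
\binom{n}{k}(A^{k}\cdot L^{n-k})(L^{k}\cdot M_{1}\cdots M_{n-k}) \geq (L^{n})(A^{k}\cdot M_{1}\cdots M_{n-k}),
\end{equation*}
valid for any nef divisor classes $A, L, M_{1},\ldots, M_{n-k}$ on $X$. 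Granting this, I would run exactly the iterative argument in the proof of Theorem \ref{main thrm}: apply the inequality with $L=D$ and $A=A_{j}$ for $j=2,\ldots,r$ (and with the $M_{\bullet}$'s chosen as an appropriate combination of the remaining $A_{i}^{a_{i}}$ and powers of $D$) to replace $A_{j}^{a_{j}}$ inside $(A_{1}^{a_{1}}\cdots A_{r}^{a_{r}}\cdot D^{n-|a|})$ by $D^{a_{j}}$ at the cost of a factor $\binom{n}{a_{j}}(A_{j}^{a_{j}}\cdot D^{n-a_{j}})/(D^{n})$. Multiplying the $r-1$ resulting inequalities produces the stated bound for the index choice $k=1$, and the general case follows by symmetry of the roles of the $A_{i}$'s.

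To prove the reverse Khovanskii--Teissier inequality itself, I would first reduce to the case where all classes are K\"ahler by a standard perturbation/limit argument (replacing each nef class by its sum with a small multiple of an ample class and passing to the limit, using continuity of intersection numbers). On the K\"ahler side, I would pick K\"ahler representatives $\omega_{A},\omega_{L},\omega_{1},\ldots,\omega_{n-k}$ and, following the strategy of \cite{popovici2016sufficientbig} for $k=1$, solve a complex Monge--Amp\`ere equation to replace $\omega_{L}$ by a cohomologous form satisfying an $L$-dependent normalization. After such a normalization, diagonalizing $\omega_{A}$ with respect to $\omega_{L}$ at each point reduces the desired global inequality to the pointwise estimate
\begin{equation*}
\binom{n}{k}\Bigl(\sum_{J}\mu_{J}\Bigr)\Bigl(\sum_{K}\Gamma_{KK}\Bigr) \geq \sum_{J}\mu_{J}\,\Gamma_{J^{c}J^{c}},
\end{equation*}
which was already verified inside the proof of Lemma \ref{lem morse}. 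Integrating the pointwise inequality over $X$ and invoking the Monge--Amp\`ere normalization to collapse one of the factors into $(L^{n})$ then yields the global cohomological inequality.

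The main obstacle is the analytic step: extending Popovici's $k=1$ construction to arbitrary $1\leq k\leq r$. The linear-algebraic input is free of charge from Lemma \ref{lem morse}, so the delicate point is to set up the correct complex Monge--Amp\`ere problem (with the right combination of $\omega_{L}^{n-k}$-type weights and the factor $\omega_{1}\wedge\cdots\wedge \omega_{n-k}$) so that, upon integration, the only cohomological quantities that survive are $(A^{k}\cdot L^{n-k})$, $(L^{k}\cdot M_{1}\cdots M_{n-k})$, $(L^{n})$ and $(A^{k}\cdot M_{1}\cdots M_{n-k})$. Once the K\"ahler case is settled by this analytic argument, the limiting procedure handles nef classes and the combinatorial iteration described above delivers Proposition \ref{thrm bezout divisor}.
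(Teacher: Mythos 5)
Your route is essentially the paper's: the proposition is deduced from the nef-class version of the reverse Khovanskii--Teissier inequality (the $k=1$ K\"ahler case from \cite{popovici2016sufficientbig}, generalized in \cite{lx2016correspondences}) together with the same iteration used in the proof of Theorem \ref{main thrm}, which is exactly what you propose. The analytic step you flag as the main obstacle is handled by the scheme already displayed in Section \ref{sec optimal}: solve $\omega_D^n = c\, \omega_A^k\wedge\omega_1\wedge\cdots\wedge\omega_{n-k}$ by Yau's theorem, apply the pointwise mixed-discriminant estimate from Lemma \ref{lem morse}, and integrate using H\"older/Cauchy--Schwarz, so no genuinely new difficulty arises there.
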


\begin{rmk}
The above result is also true for compact K\"ahler manifolds when we replace the divisors by nef $(1,1)$ classes. It also holds true for pseudo-effective classes when we replace the usual intersections by movable intersections, after taking suitable Fujita approximations.
\end{rmk}

\begin{exmple}\label{rmk eq comp}
The above inequality is sharp in dimension 2: let $A_1, A_2$ be nef classes on a smooth projective surface such that $A_1 ^2 = A_2 ^2 =0$ and let $D=A_1 + A_2$, then $D^2 = 2 A_1 \cdot A_2$ and $A_1 \cdot D = A_2 \cdot D = A_1 \cdot A_2$, thus $(A_1\cdot A_2) D^2 = 2 (A_1 \cdot D) (A_2 \cdot D)$.
\end{exmple}

\subsection{Optimal constant} \label{sec optimal}
At the end of this section, let us mention a few words on the constant in the B\'ezout type inequality. For example, we consider the inequality for convex bodies. The best possible constant depends on the relations between $K_1, ..., K_r$ and $D$. For instance, if $K_1 = K_2 =...=K_r=K$ and $a_1 =...=a_r =1$, then we need to compare $V(K^{r}, D^{n-r}) \vol(D)^{r-1}$ and $V(K, D^{n-1})^r$. By log-concavity of the sequence $\{V(K^k , D^{n-k})\}_k$, we actually have
\begin{equation*}
  V(K^{r}, D^{n-r})\vol(D)^{r-1} \leq V(K,D^{n-1})^r.
\end{equation*}

Another example is the following.

\begin{thrm}
Assume that $a_1,...,a_r \in \mathbb{N}$ and that $M_1 ^{a_1}, ..., M_r ^{a_r}$ satisfy the relation $\mathcal{R}$: $|a|=\sum_{i=1} ^r a_i = n$.  Assume that $c_\mathcal{R}$ is a positive constant such that
\begin{equation*}
 D(M_1 ^{a_1}, M_2 ^{a_2},..., M_r ^{a_r})\det(N)^{r-1} \leq c_\mathcal{R} \prod_{i=1} ^r  D(M_i ^{a_i}, N^{n-a_i})
\end{equation*}
holds true for any choices of positive matrices $M_1,...,M_r, N$. Then
\begin{itemize}
  \item for any convex bodies $K_1, ..., K_r, D$ we have
  \begin{equation*}
    V(K_1 ^{a_1}, K_2 ^{a_2},..., K_r ^{a_r})\vol(D)^{r-1} \leq c_\mathcal{R} \prod_{i=1} ^r  V(K_i ^{a_i}, D^{n-a_i});
  \end{equation*}
  \item for any nef classes $A_1, ..., A_r, D$ we have
  \begin{equation*}
    (A_1 ^{a_1} \cdot A_2 ^{a_2}\cdot...\cdot A_r ^{a_r})\vol(D)^{r-1} \leq c_\mathcal{R} \prod_{i=1} ^r  (A_i ^{a_i} \cdot D^{n-a_i}).
  \end{equation*}
\end{itemize}
\end{thrm}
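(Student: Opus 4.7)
The plan is to run the same Monge-Amp\`ere/mass transport reduction used in the proof of Lemma~\ref{lem morse}, treating it as a general template: any pointwise mixed discriminant inequality of the shape hypothesized here automatically produces both a corresponding mixed volume inequality and, via its complex version, a corresponding intersection number inequality. In our setting the assumed pointwise inequality is precisely the input that the reduction needs.

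For the \textbf{convex body statement}, I would first reduce to smooth convex bodies with non-empty interior by continuity of the mixed volume. Following \cite[Theorem~5.9]{lx2016correspondences} (itself modeled on Milman's mass transport proof \cite{milman99masstransport} of the Alexandrov-Fenchel inequalities), one introduces convex potentials $\phi_1,\dots,\phi_r,\phi_D$ attached to $K_1,\dots,K_r,D$ by solving a real Monge-Amp\`ere equation with $D$ as reference body; the Brenier/McCann transport from $D$ to $K_i$ is realized as $\nabla\phi_i$, and the Hessians $\mathrm{Hess}\,\phi_i$ and $\mathrm{Hess}\,\phi_D$ encode $K_i$ and $D$ respectively. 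In this set-up all of the mixed volumes appearing in the theorem are represented as integrals, over a common base, of mixed discriminants of these Hessians. The target inequality becomes the integrated form of
\[
D(M_1^{a_1},\dots,M_r^{a_r})\det(N)^{r-1}\leq c_\mathcal{R}\prod_{i=1}^r D(M_i^{a_i},N^{n-a_i})
\]
applied at each base point with $M_i=\mathrm{Hess}\,\phi_i$ and $N=\mathrm{Hess}\,\phi_D$; this is exactly the hypothesis.

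For the \textbf{nef class statement}, the same scheme is carried out with the complex Monge-Amp\`ere equation in place of the real one. After a small $\epsilon$-perturbation one may represent $D$ by a K\"ahler form $\omega_D$ and each nef $A_i$ by a smooth semipositive $(1,1)$-form $\omega_i$; the intersection numbers $(A_1^{a_1}\cdot\dots\cdot A_r^{a_r})$, $(A_i^{a_i}\cdot D^{n-a_i})$ and $(D^n)$ are then expressed as integrals over $X$ of wedge products of these forms. At every $x\in X$, the hermitian matrices attached to $\omega_i(x)$ and $\omega_D(x)$ satisfy the hypothesis pointwise, via the dictionary of Section~\ref{sec discr}; integrating over $X$ yields the asserted intersection inequality, and an $\epsilon$-perturbation limit passes from K\"ahler to general nef.

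The \textbf{main obstacle} is not a new one: it is to verify that the mass transport against the common reference $D$ splits the right hand side into a genuine product of mixed volumes, rather than an integral of a product, so that the product structure of the hypothesis survives integration. This factorization is precisely what the construction in \cite[\S 5]{lx2016correspondences} arranges, and once it is in place the theorem follows directly: every step in the proof of Lemma~\ref{lem morse} is imitated, with the hypothesized pointwise mixed discriminant inequality substituted at the very last step.
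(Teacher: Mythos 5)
Your overall plan (reduce the global inequality to the hypothesized pointwise mixed discriminant inequality via a Monge--Amp\`ere equation) is indeed the paper's strategy, but your write-up leaves unproved exactly the step you yourself single out as the ``main obstacle'', and that step does not follow by simply imitating Lemma \ref{lem morse}. If you represent $D$ by an arbitrary K\"ahler form $\omega_D$ and integrate the pointwise hypothesis, you obtain
\begin{equation*}
\int_X \omega_1^{a_1}\wedge\cdots\wedge\omega_r^{a_r}\;\leq\; c_\mathcal{R}\int_X \prod_{i=1}^r \frac{\omega_i^{a_i}\wedge\omega_D^{\,n-a_i}}{(\omega_D^{\,n})^{r-1}},
\end{equation*}
an integral of a product of densities, not the product of intersection numbers $\prod_i (A_i^{a_i}\cdot D^{n-a_i})$ that the theorem requires; deferring this factorization to the construction of \cite[Section 5]{lx2016correspondences} does not settle it, because the reverse Khovanskii--Teissier reduction behind Lemma \ref{lem morse} only has to control a single global factor, whereas here the right-hand side is a product of $r$ global quantities.

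The paper supplies two specific ingredients that are absent from your outline. First, the representative of $D$ is not arbitrary: by \cite{Yau78} one solves $D_u^n = c\, A_1^{a_1}\wedge\cdots\wedge A_r^{a_r}$ with $c=(D^n)/(A_1^{a_1}\cdot\ldots\cdot A_r^{a_r})$, so that the density of $A_1^{a_1}\wedge\cdots\wedge A_r^{a_r}$ against $D_u^n$ is \emph{constant}. Second, the product structure is recovered by H\"older's inequality,
\begin{equation*}
\prod_{i=1}^r \int_X A_i^{a_i}\wedge D_u^{\,n-a_i}\;\geq\;\left(\int_X \left(\prod_{i=1}^r \frac{A_i^{a_i}\wedge D_u^{\,n-a_i}}{D_u^{\,n}}\right)^{1/r} D_u^{\,n}\right)^{r},
\end{equation*}
after which the pointwise hypothesis bounds the geometric mean from below and the Monge--Amp\`ere equation makes the resulting integrand the constant $(c\,c_\mathcal{R})^{-1/r}$, giving $(A_1^{a_1}\cdot\ldots\cdot A_r^{a_r})(D^n)^{r-1}/c_\mathcal{R}$. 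The constancy is essential: with a generic representative of $D$ the $r$-th root is concave, Jensen's inequality runs in the wrong direction, and the chain of estimates breaks down. The convex-body half of your proposal is in the right spirit (the paper likewise reduces it to the mixed discriminant estimate by the mass-transport scheme of Lemma \ref{lem morse}), but there too the adapted choice of the potential for $D$ and the H\"older step are what turn the right-hand side into a genuine product; without naming these mechanisms the proof is incomplete.
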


The method of the proof is the same as Lemma \ref{lem morse} -- reduce the global inequality to a local one. (Within the same spirit, it is inspiring to compare with the proofs in \cite{milman99masstransport}, \cite{FX14}, \cite{popovici2016sufficientbig} and \cite{lx2016correspondences}.)

\begin{proof}
For simplicity, let us focus on the complex geometry situation. Without loss of generalities, we assume that all classes are K\"ahler. We use the same symbols $A_1,...,A_r, D$ to denote K\"ahler metrics in the corresponding classes. By \cite{Yau78}, we can solve the following equation:
\begin{equation}\label{eq MA}
  D_u ^n = c A_1 ^{a_1} \wedge A_2 ^{a_2}\wedge...\wedge A_r ^{a_r},
\end{equation}
where $D_u$ is a K\"ahler metric in the class $D$, $c=D^n / A_1 ^{a_1} \cdot A_2 ^{a_2}\cdot...\cdot A_r ^{a_r}$ is a constant. Then we have
\begin{align*}
  \prod_{i=1} ^r  (A_i ^{a_i} \cdot D^{n-a_i}) &=\prod_{i=1} ^r \int  A_i ^{a_i} \wedge D_u ^{n-a_i}\\
  &\geq \left(\int \left(\prod_{i=1} ^r \frac{A_i ^{a_i} \wedge D_u ^{n-a_i}}{D_u ^n}\right)^{1/r} D_u ^n \right)^r\\
  &\geq \left(\int \left(\frac{(A_1 ^{a_1}\wedge ...\wedge A_r ^{a_r})(D_u ^n)^{r-1}}{c_{\mathcal{R}}(D_u ^n)^r}\right)^{1/r} D_u ^n \right)^r\\
  &=(A_1 ^{a_1}\cdot ...\cdot A_r ^{a_r})(D^n)^{r-1}/c_{\mathcal{R}},
\end{align*}
where the first inequality follows from H\"{o}lder inequality, the second inequality follows from the property of $c_{\mathcal{R}}$, and the last inequality follows from (\ref{eq MA}). The above estimates clearly imply
 \begin{equation*}
    (A_1 ^{a_1} \cdot A_2 ^{a_2}\cdot...\cdot A_r ^{a_r})\vol(D)^{r-1} \leq c_\mathcal{R} \prod_{i=1} ^r  (A_i ^{a_i} \cdot D^{n-a_i}).
  \end{equation*}

For convex bodies, we use the same argument as in Lemma \ref{lem morse} (see \cite[Section 5]{lx2016correspondences} for more details). Then we need to estimate some integrals over $\mathbb{R}^n$, and similarly this can be reduced to the estimate for mixed discriminants.
\end{proof}

\begin{rmk}
The best possible constant $c_\mathcal{R}$ may vary when $K_1,...,K_r, D$ (or $A_1, ...,A_r, D$) satisfy additional relations, e.g. some $K_i$ (or $A_i$) are proportional to $D$, as indicated by the discussion at the beginning. See \cite{gurvits} for some other type of upper bound for mixed discriminants. 
As for the possible improvement in the general setting of Theorem \ref{main thrm}, in the reduction from the global one to local one, it should be related to Hessian type equations.
\end{rmk}

\bibliography{reference}
\bibliographystyle{alpha}

\bigskip

\bigskip

\noindent
\textsc{Department of Mathematics, Northwestern University,
Evanston, IL 60208, USA}\\
\noindent
\verb"Email: jianxiao@math.northwestern.edu"
\end{document}